\theoremstyle{plain}
\newtheorem{theorem}{Theorem}
\newtheorem{proposition}[theorem]{Proposition}
\newtheorem{corollary}[theorem]{Corollary}
\theoremstyle{definition}
\newtheorem*{remark}{Remark}
\newtheorem*{lemma'}{Lemma 5'}
\newtheorem*{proposition'1}{Proposition 6'}
\newtheorem*{proposition'2}{Proposition 7'}
\newtheorem*{theorem'}{Theorem 8'}
\newcommand{\A}{\mathscr{A}}
\newcommand{\M}{\mathscr{M}}
\newcommand{\N}{\mathscr{N}}
\newcommand{\E}{\mathbb{E}}
\newcommand{\Ha}{\mathcal{H}}
\newcommand{\1}{\mathbb{1}}
\newcommand{\ro}{\rho}
\newcommand{\f}{\varphi}
\newcommand{\s}{\operatorname{s}}
\renewcommand{\geq}{\geqslant}
\renewcommand{\leq}{\leqslant}
\begin{document}
\title[Variational formulae for entropies]{Variational formulae for entropy-like functionals for states in von Neumann algebras}
\author{Andrzej \L uczak, Hanna Pods\k{e}dkowska, Rafa{\l} Wieczorek}
\address{Faculty of Mathematics and Computer Science\\
         \L\'od\'z University\\
         ul. S. Banacha 22\\
         90-238 \L\'od\'z, Poland}

\email[Andrzej \L uczak]{andrzej.luczak@wmii.uni.lodz.pl}
\email[Hanna Pods\k{e}dkowska]{hanna.podsedkowska@wmii.uni.lodz.pl}
\email[Rafa{\l} Wieczorek]{rafal.wieczorek@wmii.uni.lodz.pl}
\subjclass[2020]{Primary: 81P17; Secondary: 46L53}
\date{}
\begin{abstract}
The paper presents variational formulae for entropy-like functionals, including Segal and R\'enyi entropies, for normal states on semifinite von Neumann algebras. The considered functionals are of the form $\tau(f(h))$ where $\tau$ is a normal faithful semifinite trace on this algebra, $h$ is a positive selfadjoint operator from $L^1(\M,\tau)$, and $f$ is an appropriate convex or concave function. The results cover both finite and semifinite algebras, and the obtained formulae generalise known results, in particular, those concerning relative entropy. Moreover, the connection between quantum entropies and the structure of abelian subalgebras is highlighted, providing new interpretations in the context of quantum information theory.
\end{abstract}

\maketitle

\section*{Introduction}
The paper is devoted to variational formulae for various entropy-like functionals for normal states on a semifinite von Neumann algebra $\M$. By such functionals, we understand functionals of the form $\tau(f(h))$ where $\tau$ is a normal faithful semifinite trace on this algebra, $h$ is a positive selfadjoint operator from $L^1(\M,\tau)$, and $f$ is an appropriate convex (concave) function. In particular, such formulae for the Segal and R\'enyi entropies are obtained.

\section{Preliminaries and notation}
Let $\M$ be a semifinite von Neumann algebra acting on a Hilbert space $\Ha$, with a normal faithful semifinite trace $\tau$, identity $\1$, and predual $\M_*$. By $\M^h$ we shall denote the set of hermitian operators in $\M$, and by  $\M^+$ --- the set of positive operators in $\M$. $\M_*^+$ stands for the set of positive functionals in $\M_*$. These functionals will be referred to as \emph{normal states}. The elements $\omega$ in $\M_*^+$ of norm one, i.e. such that $\omega(\1)=1$, will be called \emph{normalised states}. In what follows, the term \emph{state} will always refer to a \emph{normal state}. For finite $\tau$, we shall assume that $\tau(\1)=1$, thus in this case $\tau$ itself is a normalised state.

A densely defined closed operator $x$ is said to be \emph{affiliated with} $\M$, denoted by $x\eta\M$, if for its polar decomposition
\[
 x=u|x|
\]
$u\in\M$, and the spectral projections of $|x|$ belong to $\M$. Observe that if $x$ is affiliated with $\M$ and $x$ is bounded, then $x\in\M$. For a selfadjoint $h$ affiliated with $\M$, by $W^*(h)$ we shall denote the von Neumann algebra generated by the spectral projections of $h$.

An operator $x$ affiliated with $\M$ is said to be \emph{measurable} if for the spectral decomposition
\[
 |x|=\int_0^\infty t\,e(dt),
\]
we have $\tau(e([t_0,\infty)))<+\infty$ for some $t_0$. It is seen that for finite $\tau$, all operators affiliated with $\M$ are measurable. By $\widetilde{\M}$ we shall denote the *-algebra of all measurable operators endowed with the measure topology, i.e. a translation-invariant topology defined by a fundamental system of neighbourhoods of $0$, $\{N(\varepsilon,\delta): \varepsilon,\delta>0\}$, given by
\begin{align*}
 N(\varepsilon,\delta)=\{x\in\widetilde{\M}:&\text{ there exists a projection $p$ in $\M$ such that}\\ &xp\in\M,\quad\|xp\|_\infty\leq\varepsilon\quad\text{and} \quad\tau(p^\bot)\leq\delta\}.
\end{align*}
Measurable operators $h$ such that
\[
 \tau(|h|)<+\infty
\]
form the Banach space $L^1(\M,\tau)$ with the norm
\[
 \|h\|_1=\tau(|h|), \quad h\in L^1(\M,\tau).
\]

For each $\ro\in\M_*$, there is an operator $D_\ro\in L^1(\M,\tau)$ such that
\[
 \ro(x)=\tau(xD_\ro)=\tau(D_\ro  x), \quad x\in\M.
\]
The correspondence between $\M_*$ and $L^1(\M,\tau)$ defined above is one-to-one and isometric. For a state $\omega$, the corresponding element in $L^1(\M,\tau)^+$ will be denoted by $D_\omega$ and called the \emph{density} of $\omega$, thus
\[
 \omega(x)=\tau(xD_\omega)=\tau(D_\omega x)=\tau\big(D_\omega^{\frac{1}{2}}xD_\omega^{\frac{1}{2}}\big), \quad x\in\M.
\]

For a state $\omega$, its \emph{Segal entropy}, $H(\omega)$, is defined as
\[
 H(\omega)=\tau(D_\omega\log D_\omega)=\int_0^\infty t\log t\,\tau(e(dt)),
\]
where
\begin{equation}\label{sp}
 D_\omega=\int_0^\infty t\,e(dt)
\end{equation}
is the spectral decomposition of $D_\omega$. This definition of entropy with a minus sign before the trace was introduced in \cite{S}. From the inequality $t\log t\geq t-1$, it follows that for finite $\tau$ we have a bound
\[
 H(\omega)\geq\tau(D_\omega)-1=\|\omega\|-1,
\]
so for a normalised state $\omega$, $H(\omega)\geq0$ (but it can happen that $H(\omega)=+\infty$).

For normal states $\omega$ and $\f$, their \emph{relative entropy}, $S(\omega,\f)$, is defined as
\[
 S(\omega,\f)=\tau(D_\omega\log D_\omega-D_\omega\log D_\f).
\]
It is known that for normalised states we have $S(\omega,\f)\geq0$.

Let $\omega$ be a normalised state, and let $\alpha\in(0,1)\cup(1,+\infty)$. The family of \emph{R\'enyi's entropies} $R_\alpha(\omega)$ is defined as
\[
 R_\alpha(\omega)=\frac{1}{\alpha-1}\log\tau(D_\omega^\alpha),
\]
under the assumption that $\tau(D_\omega^\alpha)<+\infty$.

\section{Variational formula for Segal entropy}
In \cite{Pe1}, a variational formula for the relative entropy was obtained which in the case of a finite algebra, $\omega$ --- normalised and faithful, and $\f$ --- faithful has the form
\[
 S(\omega,\f)=\sup_{h\in\M^h}\big(\omega(h)-\log\tau\big(e^{\log D_\f+h}\big)\big).
\]
Since in a finite von Neumann algebra we have
\[
 H(\omega)=S(\omega,\tau),
\]
it follows that in this case
\begin{equation}\label{basic}
 H(\omega)=\sup_{h\in\M^h}\big(\omega(h)-\log\tau\big(e^h\big)\big).
\end{equation}
It is interesting that a formula similar to the equality \eqref{basic} holds also for semifinite algebras, even without the assumption on the faithfulness of $\omega$, with the supremum taken over a larger set of operators.
\begin{theorem}\label{varfor1}
Let $\M$ be a semifinite von Neumann algebra with a normal faithful semifinite trace $\tau$, and let $\omega$ be a normalised state having finite Segal's entropy. Then
\[
 H(\omega)=\sup_{h^*=h\eta\M}\{\tau(D_\omega h)-\log\tau\big(e^h\big):D_\omega h, e^h\in L^1(\M,\tau)\}.
\]
\end{theorem}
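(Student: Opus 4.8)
The plan is to prove the two inequalities between $H(\omega)$ and the supremum separately.

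For the inequality ``$\geq$'', fix an admissible operator, i.e.\ a selfadjoint $h\eta\M$ with $D_\omega h,e^h\in L^1(\M,\tau)$. From the spectral decomposition of $h$ one sees that $e^h$ is a faithful positive element of $L^1(\M,\tau)$, and $0<\tau(e^h)<+\infty$; hence $D_\f:=e^h/\tau(e^h)$ is the density of a faithful normalised state $\f$. Because the scalar $\log\tau(e^h)\,\1$ commutes with $h$ we have $\log D_\f=h-\log\tau(e^h)\,\1$, so $D_\omega\log D_\f=D_\omega h-\log\tau(e^h)\,D_\omega\in L^1(\M,\tau)$ and, using $\tau(D_\omega)=\omega(\1)=1$,
\[
 S(\omega,\f)=\tau\bigl(D_\omega\log D_\omega-D_\omega\log D_\f\bigr)=H(\omega)-\tau(D_\omega h)+\log\tau(e^h).
\]
The three summands on the right are finite real numbers, so $S(\omega,\f)\in\mathbb{R}$; invoking the nonnegativity of the relative entropy of normalised states, $S(\omega,\f)\geq0$, which rearranges to $\tau(D_\omega h)-\log\tau(e^h)\leq H(\omega)$. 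Taking the supremum over admissible $h$ yields ``$\geq$''. This step is essentially the Gibbs / Peierls--Bogoliubov inequality repackaged through relative entropy, and I expect it to be routine.

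For the inequality ``$\leq$'', I would produce admissible operators $h_n$ with $\tau(D_\omega h_n)-\log\tau(e^{h_n})\to H(\omega)$. Let $s=\s(D_\omega)$ be the support projection of $\omega$, so that $D_\omega=sD_\omega s$ and $D_\omega$ restricted to $s\Ha$ is a faithful element of $L^1(s\M s,\tau)$; then $\log(D_\omega|_{s\Ha})$ is a well-defined selfadjoint operator affiliated with $s\M s$. Since the restriction of $\tau$ to the corner $s^\bot\M s^\bot$ is again a normal faithful semifinite trace, there exists a faithful positive $g_0\in L^1(s^\bot\M s^\bot,\tau)$. Let $h_n$ be the selfadjoint operator affiliated with $\M$ which acts as $\log(D_\omega|_{s\Ha})$ on $s\Ha$ and as $-n\,s^\bot+\log g_0$ on $s^\bot\Ha$. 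Since $s\Ha$ and $s^\bot\Ha$ reduce $h_n$, functional calculus gives $e^{h_n}=D_\omega+e^{-n}g_0\in L^1(\M,\tau)$ and $D_\omega h_n=D_\omega\log D_\omega$, the latter lying in $L^1(\M,\tau)$ because $\omega$ has finite Segal entropy. Hence $\tau(D_\omega h_n)=H(\omega)$ and $\tau(e^{h_n})=\tau(D_\omega)+e^{-n}\tau(g_0)=1+e^{-n}\tau(g_0)$, so
\[
 \tau(D_\omega h_n)-\log\tau(e^{h_n})=H(\omega)-\log\bigl(1+e^{-n}\tau(g_0)\bigr)\longrightarrow H(\omega)\qquad(n\to\infty),
\]
which gives ``$\leq$''. (When $\omega$ is faithful, $s=\1$, the $g_0$-term disappears, and already $h=\log D_\omega$ makes the right-hand side equal to $H(\omega)$.)

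The main obstacle I anticipate is the second step in the genuinely semifinite, non-faithful situation. One must carefully assemble the affiliated operator $h_n$ from a block on $s\Ha$ and a block on $s^\bot\Ha$, justify the operator identities $\log(e^{h_n})=h_n$, $e^{h_n}=D_\omega+e^{-n}g_0$ and $D_\omega h_n=D_\omega\log D_\omega$ for these unbounded operators, and above all arrange $e^{h_n}\in L^1(\M,\tau)$ --- this is precisely where enlarging the optimisation domain from $\M^h$ to selfadjoint operators merely \emph{affiliated} with $\M$ becomes indispensable (truncating $\log D_\omega$ to stay inside $\M$, which works for finite $\tau$, fails here because the spectral mass of $D_\omega$ near $0$ can carry infinite trace), and where semifiniteness of $\tau$ on $s^\bot\M s^\bot$ is used to manufacture $g_0$. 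The remaining ingredients --- existence of $g_0$, domain bookkeeping, and the elementary limit --- are standard.
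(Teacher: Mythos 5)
Your proposal is correct and follows essentially the same route as the paper: the lower bound for $H(\omega)$ via the nonnegativity of $S(\omega,\f)$ with $D_\f=e^h/\tau(e^h)$, and the upper bound by taking $h$ to be $\log D_\omega$ on the support of $D_\omega$ plus an operator on the kernel whose exponential has arbitrarily small trace. The only cosmetic difference is that the paper builds that kernel part explicitly as $\sum_i\alpha_iq_i$ with $\alpha_i=\log\frac{\varepsilon}{2^i\tau(q_i)}$ for a resolution $\sum_iq_i=e(\{0\})$ with $\tau(q_i)<+\infty$, whereas you invoke an abstract faithful $g_0\in L^1(s^\bot\M s^\bot,\tau)^+$ scaled by $e^{-n}$ --- the same device.
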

\begin{proof}
For arbitrary fixed $h=h^*\eta\M$, such that $D_\omega h, e^h\in L^1(\M,\tau)$ define a normalised state $\f$ on $\M$ by its density
\[
 D_\f=\frac{e^h}{\tau\big(e^h\big)}.
\]
Then we have
\begin{align*}
 0&\leq S(\omega,\f)=\tau(D_\omega\log D_\omega-D_\omega\log D_\f)\\
 &=\tau\big(D_\omega\log D_\omega-D_\omega\big(h-\log\tau\big(e^h\big)\1\big)\big)\\
 &=\tau\big(D_\omega\log D_\omega\big)-\tau\big(D_\omega\big(h-\log\tau\big(e^h\big)\1\big)\big)\\
 &=H(\omega)-\tau(D_\omega h)+\log\tau\big(e^h\big),
\end{align*}
which yields
\[
 H(\omega)\geq\tau(D_\omega h)-\log\tau\big(e^h\big),
\]
showing that
\[
 H(\omega)\geq\sup_{h^*=h\eta\M}\{\tau(D_\omega h)-\log\tau\big(e^h\big):D_\omega h, e^h\in L^1(\M,\tau)\}.
\]
Conversely, let $D_\omega$ have spectral decomposition as in \eqref{sp}. Denote $q=e(\{0\})$, and let $q_1,q_2,\dots$ be projections in $\M$ such that $\sum\limits_{i=1}^\infty q_i=q$, $\tau(q_i)<+\infty$. Let
\[
 f(t)=\begin{cases}
 \log t, & \text{for $t>0$}\\
 0, & \text{for $t=0$}
 \end{cases}.
\]
For arbitrary $\varepsilon>0$, set $\alpha_i=\log\frac{\varepsilon}{2^i\tau(q_i)}$, and define operators $h_1$ and $h_2$ by the spectral decompositions
\[
  h_1=\int_0^\infty f(t)\,e(dt), \qquad h_2=\sum_{i=1}^\infty\alpha_iq_i.
\]
We have
\[
 e^{h_1}=D_\omega+q, \qquad e^{h_2}=\sum_{i=1}^\infty e^{\alpha_i}q_i+q^\bot,
\]
and putting $h=h_1+h_2$, we obtain
\[
 D_\omega h=D_\omega h_1=D_\omega\log D_\omega,
\]
and
\[
 e^h=D_\omega+\sum_{i=1}^\infty e^{\alpha_i}q_i,
\]
since
\[
 D_\omega=q^\bot D_\omega=D_\omega q^\bot,
\]
and
\[
 \sum\limits_{i=1}^\infty e^{\alpha_i}q_i=q\sum\limits_{i=1}^\infty e^{\alpha_i}q_i=\big(\sum\limits_{i=1}^\infty e^{\alpha_i}q_i\big)q.
\]
Consequently,
\[
 \log\tau\big(e^h\big)=\log\big(\tau(D_\omega)+\tau(\sum_{i=1}^\infty e{^\alpha_i}q_i)\big)=\log(1+\varepsilon),
\]
which yields
\begin{align*}
 \tau(D_\omega h)-\log\tau\big(e^h\big)&=\tau(D_\omega\log D_\omega)-\log(1+\varepsilon)\\
 &=H(\omega)-\log(1+\varepsilon)\geq H(\omega)-\varepsilon,
\end{align*}
showing that
\[
 H(\omega)\leq\sup_{h^*=h\eta\M}\{\tau(D_\omega h)-\log\tau\big(e^h\big):D_\omega h, e^h\in L^1(\M,\tau)\}. \qedhere
\]
\end{proof}
Observe that the same reasoning as above leads to the formula \eqref{basic} in the case of a finite algebra and a normalised, not necessary faithful, state.
\begin{theorem}
Let $\M$ be a finite von Neumann algebra with a normal faithful finite trace $\tau$, and let $\omega$ be a normalised state having finite Segal's entropy. Then
\[
 H(\omega)=\sup_{h\in\M^h}\big(\tau(D_\omega h)-\log\tau\big(e^h\big)\big)=\sup_{h\in\M^h}\big(\omega(h)-\log\tau\big(e^h\big)\big).
\]
\end{theorem}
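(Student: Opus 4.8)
The plan is to prove the two inequalities separately, mirroring the proof of Theorem~\ref{varfor1}, and to record at the outset two trivialities. First, the two displayed suprema are literally equal, since $\omega(h)=\tau(D_\omega h)$ for every $h\in\M^h$ by definition of the density $D_\omega$. Second, in the finite case the side constraints from Theorem~\ref{varfor1} are vacuous: $e^h\in\M$ because $h$ is bounded, and $D_\omega h\in L^1(\M,\tau)$ because $D_\omega\in L^1(\M,\tau)$ and $h\in\M$; hence $\M^h$ is a subset of the family of operators over which the supremum in Theorem~\ref{varfor1} is taken. In particular the inequality ``$\leq$'' (with ``$\leq$'' now meaning $\sup\leq H(\omega)$) is immediate from Theorem~\ref{varfor1}; alternatively one re-derives it directly as below.

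For the direct argument of ``$\sup\leq H(\omega)$'' I would copy the first half of the proof of Theorem~\ref{varfor1} essentially verbatim, noting only the simplification that, for $h\in\M^h$, the operator $e^h$ is bounded, positive and invertible, so $D_\f=e^h/\tau(e^h)$ is genuinely the density of a normalised state $\f$ with $\log D_\f=h-\log\tau(e^h)\,\1$; then $0\leq S(\omega,\f)=H(\omega)-\tau(D_\omega h)+\log\tau(e^h)$ gives $H(\omega)\geq\omega(h)-\log\tau(e^h)$ and hence the claim.

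For the reverse inequality the natural candidate $h=\log D_\omega$ of Theorem~\ref{varfor1} is unbounded in general, so the substantive part is to replace it by bounded truncations. Write $D_\omega=\int_0^\infty t\,e(dt)$ and $q=e(\{0\})$; since $\tau$ is finite, $\tau(q)\leq\tau(\1)=1<+\infty$, so — in contrast with Theorem~\ref{varfor1} — the kernel contributes only one bounded summand. Fix $\varepsilon>0$, set $h_2=\big(\log\tfrac{\varepsilon}{\tau(q)}\big)q$ (and $h_2=0$ if $q=0$), put $g_m(t)=\max\{\min\{\log t,\log m\},-\log m\}$ for $t>0$ and $g_m(0)=0$, and let $h^{(m)}=\int_0^\infty g_m(t)\,e(dt)+h_2\in\M^h$. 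Then $D_\omega h^{(m)}=\int_0^\infty t\,g_m(t)\,e(dt)$ and $e^{h^{(m)}}=\int_{(0,\infty)}\min\{\max\{t,1/m\},m\}\,e(dt)+\tfrac{\varepsilon}{\tau(q)}q$, and, splitting the relevant integrals at $t=1$ and using dominated convergence on $(0,1)$ (the integrands are bounded there and $\tau(e((0,1)))<+\infty$) together with monotone convergence on $[1,\infty)$ (with finite limit, since $H(\omega)<+\infty$ forces $\int_1^\infty t\log t\,\tau(e(dt))<+\infty$), one gets
\[
 \tau(D_\omega h^{(m)})\longrightarrow H(\omega),\qquad \tau(e^{h^{(m)}})\longrightarrow\tau(D_\omega)+\varepsilon=1+\varepsilon
\]
as $m\to\infty$. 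Hence $\tau(D_\omega h^{(m)})-\log\tau(e^{h^{(m)}})\to H(\omega)-\log(1+\varepsilon)\geq H(\omega)-\varepsilon$, and letting first $m\to\infty$ and then $\varepsilon\downarrow0$ yields $\sup\geq H(\omega)$.

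The main obstacle is precisely this truncation: one must pass from the unbounded optimiser to bounded operators without losing the value of the functional, and it is here that finiteness of $\tau$ is used twice — to keep $\tau(q)$ finite (so the kernel needs no infinite decomposition into pieces of finite trace) and to make $\tau(e^{h^{(m)}})$ finite for bounded $h^{(m)}$ — together with the standing hypothesis $H(\omega)<+\infty$, which guarantees the limit on $[1,\infty)$ is finite.
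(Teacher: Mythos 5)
Your proof is correct, and it follows the same strategy as the paper (lower bound for the supremum from positivity of relative entropy; upper bound by exhibiting near-optimisers built from the spectral decomposition of $D_\omega$). The paper itself gives no separate argument here — it merely asserts that ``the same reasoning'' as in Theorem~\ref{varfor1} applies — and you have correctly identified the one point where that assertion is not literal: the near-optimiser $h=h_1+h_2$ with $h_1=\log D_\omega$ (on the support) constructed in the proof of Theorem~\ref{varfor1} is unbounded in general, whereas the present supremum runs over $\M^h$. Your truncation $g_m(t)=\max\{\min\{\log t,\log m\},-\log m\}$, with dominated convergence on $(0,1)$ (finite trace) and monotone convergence on $[1,\infty)$ (finite limit because $H(\omega)<+\infty$), supplies exactly the missing step, and your observation that finiteness of $\tau$ makes the side conditions $D_\omega h,\,e^h\in L^1(\M,\tau)$ automatic for bounded $h$ — so that the inequality $\sup\leq H(\omega)$ is literally a special case of Theorem~\ref{varfor1} — is also right. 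The only cosmetic remark is that the correction term $h_2=\big(\log\tfrac{\varepsilon}{\tau(q)}\big)q$ could be absorbed into the truncation by setting $g_m(0)=-m$, sending the kernel's contribution to $\tau(e^{h^{(m)}})$ to $0$ instead of $\varepsilon$; but keeping $h_2$ mirrors the paper's construction and is equally valid.
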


\section{Variational formulae for convex (concave) functions}
Let $\Phi\colon\M\to\M$ be a normal positive linear unital map such that $\tau\circ\Phi=\tau$. Then $\Phi$ may be extended to a bounded positive map from $L^1(\M,\tau)$ into itself, denoted by the same symbol, in particular, it sends the selfadjoint operators to selfadjoint ones. Let $h$ be a selfadjoint operator in $\M$, and let $f$ be an operator convex function defined on an interval containing the spectrum of $h$ and the spectrum of $\Phi(h)$. Then the following Jensen inequality holds (even without the assumption $\tau\circ\Phi=\tau$).
\begin{equation}\label{Jen}
 f(\Phi(h))\leq\Phi(f(h)).
\end{equation}
If only the convexity of $f$ is assumed, then we have the Jensen inequality in its weaker form
\begin{equation}\label{Jen1}
 \tau(f(\Phi(h)))\leq\tau(\Phi(f(h))),
\end{equation}
under the assumption that both sides of the inequality above are defined. (Of course, for concave functions we have reversed inequalities.) A problem with the Jensen inequality arises when we want to extend it to unbounded operators (say from $L^1(\M,\tau)$). For the function $f(t)=t\log t$ it was done in \cite{LP}, while for the function $f(t)=t^\alpha$, $\alpha\in(0,1)\cup(1,+\infty)$ it was done in \cite{LPW1}. A solution to the general problem (which seems to be considered as part of the folklore) was obtained in \cite{L1} for finite algebras. In the semifinite case, we restrict attention to the class of (operator) convex or concave functions for which Jensen's inequality in its weaker form holds (the functions $f(t)=t\log t$ and $f(t)=t^\alpha$ being our main examples). We shall denote this class by $\mathfrak{J}$. Observe that for $f$ satisfying the stronger form of Jensen's inequality such that $f(h)\in L^1(\M,\tau)$, and $\Phi$ such that $\tau\circ\Phi=\tau$, both sides of the inequality \eqref{Jen1} are defined. To see this, denote $x=\Phi(f(h))$ and $y=f(\Phi(h))$. For the Jordan decompositions of $x$ and $y$ we have
\[
 0\leq x^+-x^--(y^+-y^-)\leq x^+-y^++y^-,
\]
and denoting by $\s(y^+)$ the support of $y^+$, we get
\[
 0\leq \s(y^+)x^+\s(y^+)-y^+,
\]
which yields
\[
 0\leq\tau(y^+)\leq\tau(\s(y^+)x^+\s(y^+))\leq\tau(x^+)<+\infty,
\]
since
\[
 \tau(x)=\tau(\Phi(f(h)))=\tau(f(h)),
\]
i.e. $x\in L^1(\M,\tau)$. Consequently, $\tau(y)=\tau(y^+)-\tau(y^-)$ is defined.

For a von Neumann subalgebra $\N$ of $\M$ such that $\tau|\N$ is semifinite, denote by $\E_\N$ the normal conditional expectation from $\M$ onto $\N$ such that $\tau\circ\E_\N=\tau$. Then $\E_\N$ can be extended to a bounded (of norm one) linear positive map from $L^1(\M,\tau)$ onto $L^1(\N,\tau|\N)$.
\begin{proposition}\label{conv}
Let $\M$ be a semifinite von Neumann algebra with a normal faithful semifinite trace $\tau$, and let $h$ be a positive selfadjoint operator in $L^1(\M,\tau)$. Then for a continuous convex function $f\in\mathfrak{J}$ defined on\\ $\mathbb{R}_+=[0,+\infty)$ such that $f(h)\in L^1(\M,\tau)$ we have
\begin{align*}
 \tau(f(h))=\sup\{&\tau(f(\E_\A h)):\A - \textit{abelian von Neumann}\\
  &\textit{subalgebra of $\M$ such that $\tau|\A$ is semifinite}\}.
\end{align*}
\end{proposition}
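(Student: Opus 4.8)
The plan is to prove the two inequalities ``$\geq$'' and ``$\leq$'' separately: the first from Jensen's inequality, the second by exhibiting an abelian subalgebra $\A$ at which the supremum is actually attained.

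For ``$\geq$'' fix an arbitrary abelian von Neumann subalgebra $\A\subseteq\M$ with $\tau|\A$ semifinite. Then $\E_\A$ is a normal positive unital map on $\M$ with $\tau\circ\E_\A=\tau$, so the weaker Jensen inequality \eqref{Jen1} applies with $\Phi=\E_\A$, and both of its sides are defined since $f(h)\in L^1(\M,\tau)$ (this is precisely the situation discussed after \eqref{Jen1}; for a general $f\in\mathfrak{J}$ one may, if necessary, restrict the supremum to those $\A$ for which $\tau(f(\E_\A h))$ is meaningful, which by the next step does not change its value). Extending $\E_\A$ to $L^1(\M,\tau)$ and using $\tau\circ\E_\A=\tau$ once more, one gets
\[
 \tau(f(\E_\A h))\leq\tau\big(\E_\A(f(h))\big)=\tau(f(h)),
\]
and taking the supremum over $\A$ yields the inequality.

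For ``$\leq$'' the point is that the natural maximiser is $W^*(h)$, since one expects $\E_{W^*(h)}h=h$. The only delicacy is that $\tau|W^*(h)$ need not be semifinite: if $h=\int_0^\infty t\,e(dt)$, the kernel projection $q=e(\{0\})$ may have infinite trace, while $\tau(e([\delta,\infty)))\leq\delta^{-1}\tau(h)<+\infty$ for every $\delta>0$. So, exactly as in the proof of Theorem~\ref{varfor1}, I would pick projections $q_1,q_2,\dots\in\M$ with $\sum_i q_i=q$ and $\tau(q_i)<+\infty$ (possible by semifiniteness of $\tau$), and let $\A$ be the abelian von Neumann algebra generated by $W^*(h)$ and $q_1,q_2,\dots$. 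Since $q,q^\bot\in\A$, we have $\A=\A q^\bot\oplus\A q$, with $\A q=W^*(\{q_i\})\cong\ell^\infty(\mathbb{N})$ (whose minimal projections $q_i$ have finite trace) and $\A q^\bot=W^*(h)q^\bot$ (the $q_i$ being killed by $q^\bot$), this last algebra being generated by the finite-trace projections $e([\delta,\infty))$, $\delta>0$; hence $\tau|\A$ is semifinite. Moreover $h=hq^\bot$ is affiliated with $\A$ and $\tau(h)<+\infty$, so $h\in L^1(\A,\tau|\A)^+$ and therefore the trace-preserving, $L^1$-extended conditional expectation $\E_\A$ fixes it, $\E_\A h=h$. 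Consequently $\tau(f(\E_\A h))=\tau(f(h))$, so the supremum equals $\tau(f(h))$ and is in fact a maximum. Together with ``$\geq$'' this proves the proposition.

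I expect the main obstacle to be the verification that the enlarged algebra $\A$ really has $\tau|\A$ semifinite --- i.e.\ controlling $\A$ tightly enough, through the orthogonal splitting by $q$ and $q^\bot$, to be certain no infinite-trace projection is ``trapped'' without a nonzero finite subprojection --- together with the accompanying fact that $h\in L^1(\A,\tau|\A)^+$ forces $\E_\A h=h$ (using that the $L^1$-extension of $\E_\A$ restricts to the identity on $L^1(\A,\tau|\A)$). The well-definedness of $\tau(f(\E_\A h))$ for the general $\A$ occurring in the ``$\geq$'' part is a secondary technical matter, covered by the remarks following \eqref{Jen1} and automatic for the principal examples $f(t)=t\log t$ and $f(t)=t^\alpha$, which satisfy the stronger Jensen inequality.
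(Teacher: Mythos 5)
Your proof is correct and follows essentially the same route as the paper: Jensen's inequality (in its weaker, trace form) for the inequality ``$\geq$'', and for ``$\leq$'' an abelian subalgebra containing $W^*(h)$ obtained by splitting the kernel projection $e(\{0\})$ into finite-trace pieces $q_i$ so that $\tau|\A$ becomes semifinite while $\E_\A h=h$ still holds. The paper packages this same construction slightly differently --- it relabels the $q_i$ as point masses $\tilde{e}(\{\lambda_i\})$ of a modified spectral measure at points $\lambda_i$ with $e(\{\lambda_i\})=0$, so that the enlarged algebra is generated by a single spectral measure --- but the resulting algebra coincides with yours, and your direct-sum argument for semifiniteness is a valid substitute.
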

\begin{proof}
Let
\[
 h=\int_0^\infty t\,e(dt)
\]
be the spectral decomposition of $h$. We have for arbitrary $a>0$
\begin{equation}\label{sf}
 +\infty>\tau(h)=\int_0^\infty t\,\tau(e(dt))\geq\int_a^\infty t\,\tau(e(dt))\geq a\tau(e[a,+\infty)),
\end{equation}
showing that for the projection $e((0,+\infty))$ we can find a projection $e([a,+\infty))\leq e((0,+\infty))$ having finite trace. If $\tau(e(\{0\}))<+\infty$, then the algebra $W^*(h)$ is semifinite (though $\tau(e(0,+\infty))=+\infty$ if $\tau$ is not finite). If $\tau(e(\{0\}))=+\infty$, choose projections $q_1,q_2,\dots$ in $\M$ such that $\sum\limits_{i=1}^\infty q_i=e(\{0\})$, $\tau(q_i)<+\infty$, and choose $\lambda_1,\lambda_2,\dots$ in $(0,+\infty)$ such that $e(\{\lambda_i\})=0$. Define a new spectral measure $\tilde{e}$ by the formula
\[
 \tilde{e}|(0,+\infty)\smallsetminus\{\lambda_1,\lambda_2,\dots\}=e|(0,+\infty)\smallsetminus\{\lambda_1,\lambda_2,\dots\}, \qquad \tilde{e}(\{\lambda_i\})=q_i.
\]
Let $\widetilde{\A}$ be the abelian von Neumann subalgebra of $\M$ generated by the spectral measure $\tilde{e}$. Then $\tau|\widetilde{\A}$ is semifinite. Put
\[
 g(t)=\begin{cases}
  t, & \text{for $t\in(0,+\infty)\smallsetminus\{\lambda_1,\lambda_2,\dots\}$}\\
  0, & \text{for $t\in\{\lambda_1,\lambda_2,\dots\}$}
 \end{cases}.
\]
We then have
\[
 h=\int_0^\infty g(t)\,\tilde{e}(dt),
\]
thus $h\in L^1(\widetilde{\A},\tau|\widetilde{\A})$. Hence for the conditional expectation $\E_{\widetilde{\A}}$ (extended to $L^1(\M,\tau)$) we have
\[
 \E_{\widetilde{\A}}h=h.
\]
In either case, there is an abelian von Neumann subalgebra $\widetilde{\A}$ of $\M$ such that $\tau|\widetilde{\A}$ is semifinite, and
\[
 \tau(f(h))=\tau(f(\E_{\widetilde{\A}}h)),
\]
which yields the inequality
\begin{align*}
 \tau(f(h))\leq\sup\{&\tau(f(\E_\A h)):\A - \textit{abelian von Neumann}\\
  &\textit{subalgebra of $\M$ such that $\tau|\A$ is semifinite}\}.
\end{align*}
On the other hand, from Jensen's inequality we get for every $\A$ as above
\[
\tau(f(\E_\A h))\leq\tau(\E_\A f(h))=\tau(f(h)),
\]
showing the claim.
\end{proof}
A counterpart of the proposition above for concave functions is
\begin{proposition}\label{conc}
Let $\M$ be a semifinite von Neumann algebra with a normal faithful semifinite trace $\tau$, and let $h$ be a positive selfadjoint operator in $L^1(\M,\tau)$. Then for a continuous concave function $f\in\mathfrak{J}$ defined on\\ $\mathbb{R}_+=[0,+\infty)$ such that $f(h)\in L^1(\M,\tau)$ we have
\begin{align*}
 \tau(f(h))=\inf\{&\tau(f(\E_\A h)):\A - \textit{abelian von Neumann}\\
  &\textit{subalgebra of $\M$ such that $\tau|\A$ is semifinite}\}.
\end{align*}
\end{proposition}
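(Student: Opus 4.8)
The plan is to carry out the proof of Proposition~\ref{conv} with every inequality reversed. The key observation is that the construction realising $h$ inside an abelian subalgebra on which the trace is semifinite does not use convexity or concavity at all, while Jensen's inequality simply flips sign when $f$ is concave; so essentially the only thing to re-examine is that all the traces occurring in the infimum are meaningful.

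Concretely, I would first reproduce the first half of the proof of Proposition~\ref{conv} verbatim. Starting from the spectral decomposition $h=\int_0^\infty t\,e(dt)$, the estimate \eqref{sf} again yields, for suitable $a>0$, a subprojection $e([a,+\infty))$ of $e((0,+\infty))$ of finite trace; then, according to whether $\tau(e(\{0\}))$ is finite or not, one takes $\widetilde{\A}=W^*(h)$, respectively the abelian algebra $\widetilde{\A}$ generated by the modified spectral measure $\tilde e$ obtained by redistributing the (possibly infinite-trace) projection $e(\{0\})$ over a sequence $q_i$ of finite-trace projections placed at fresh points $\lambda_i$ not charged by $e$. In both cases $\tau|\widetilde{\A}$ is semifinite, $h\in L^1(\widetilde{\A},\tau|\widetilde{\A})$ and hence $\E_{\widetilde{\A}}h=h$, so that $\tau(f(h))=\tau(f(\E_{\widetilde{\A}}h))$ and therefore
\[
 \tau(f(h))\geq\inf\{\tau(f(\E_\A h)):\A\ \text{abelian von Neumann subalgebra of }\M,\ \tau|\A\ \text{semifinite}\}.
\]
For the reverse inequality I would invoke the weaker form of Jensen's inequality \eqref{Jen1} in its concave version: since $f\in\mathfrak{J}$ is concave and $\E_\A$ (extended to $L^1(\M,\tau)$) satisfies $\tau\circ\E_\A=\tau$, for every admissible $\A$ we get $\tau(f(\E_\A h))\geq\tau(\E_\A f(h))=\tau(f(h))$, the equality using $f(h)\in L^1(\M,\tau)$ and the trace-preservation of $\E_\A$ on $L^1$. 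Passing to the infimum over $\A$ gives $\inf\{\dots\}\geq\tau(f(h))$, and together with the displayed inequality this is the claim.

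The one point requiring care — and the main obstacle — is to make sure that $\tau(f(\E_\A h))$ is a well-defined element of $[-\infty,+\infty]$ for every admissible $\A$; without this neither the infimum nor the appeal to \eqref{Jen1} is legitimate. This is handled exactly as in the paragraph preceding Proposition~\ref{conv}, with the roles of positive and negative parts interchanged. Writing $x=\E_\A f(h)\in L^1(\M,\tau)$, so that $\tau(x)=\tau(f(h))$ is finite and in particular $\tau(x^-)<+\infty$, and $y=f(\E_\A h)$, one uses the operator form of Jensen's inequality \eqref{Jen} (valid for the concave examples we have in mind, e.g.\ $f(t)=t^\alpha$ with $\alpha\in(0,1)$, which are operator concave) to get $y\geq x$; then $0\leq x^--x^+-(y^--y^+)\leq x^--y^-+y^+$, whence, compressing by the support $\s(y^-)$, $0\leq\s(y^-)x^-\s(y^-)-y^-$ and so $\tau(y^-)\leq\tau(\s(y^-)x^-\s(y^-))\leq\tau(x^-)<+\infty$. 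Thus $\tau(y)=\tau(y^+)-\tau(y^-)$ is defined (possibly $+\infty$), the continuity of $f$ on $[0,+\infty)$ together with positivity of $\E_\A h$ guaranteeing that $f(\E_\A h)$ itself makes sense. This is the step I expect to demand the most care; the rest is a line-by-line transcription of the previous proof.
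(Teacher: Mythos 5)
Your argument is correct, but the paper dispatches this proposition in one sentence: apply Proposition~\ref{conv} to the convex function $-f$. Since $f\in\mathfrak{J}$ is concave, $-f$ is convex and again lies in $\mathfrak{J}$ (the reversed weak Jensen inequality for $f$ is exactly the weak Jensen inequality for $-f$), it is continuous on $\mathbb{R}_+$, and $(-f)(h)=-f(h)\in L^1(\M,\tau)$; Proposition~\ref{conv} then gives $\tau(-f(h))=\sup_\A\tau(-f(\E_\A h))=-\inf_\A\tau(f(\E_\A h))$, which is the claim. This reduction silently absorbs everything you re-derive by hand: the construction of $\widetilde{\A}$ with $\E_{\widetilde{\A}}h=h$, the reversed Jensen step, and in particular the well-definedness of $\tau(f(\E_\A h))$, since the discussion preceding Proposition~\ref{conv} applied to the convex function $-f$ is literally your positive/negative-part argument with $x^\pm$ and $y^\pm$ interchanged. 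Your line-by-line transcription is sound (the chain $y\geq x$, $y^-\leq\s(y^-)x^-\s(y^-)$, $\tau(y^-)\leq\tau(x^-)<+\infty$ is the correct mirror image), so nothing is wrong --- you have simply re-proved Proposition~\ref{conv} in disguise rather than invoking it. The only practical lesson is economy: when a statement is the image of an already-proved one under $f\mapsto-f$, check first whether the hypotheses are preserved under that substitution before redoing the construction.
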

The proof follows by considering the convex function $-f$ and applying Proposition \ref{conv}.

By $\mathfrak{A}$ we shall denote the family of abelian von Neumann subalgebras of $\M$ generated by countable resolutions of identity $\{p_1,p_2,\dots\}$ for some projections $p_1,p_2,\dots$ in $\M$ such that $\sum\limits_{i=1}^\infty p_i=\1$, and\\ $\tau(p_i)<+\infty$, i.e for $\A\in\mathfrak{A}$ we have
\begin{equation}\label{A}
 \A=\big\{\sum_{i=1}^\infty\gamma_ip_i:\gamma_i\in\mathbb{C},\,\sup_i|\gamma_i|<+\infty\big\},
\end{equation}
where $p_1,p_2,\dots$ are as above.
\begin{theorem}\label{varfor2}
Let $\M$ be a semifinite von Neumann algebra with a normal faithful semifinite trace $\tau$, let $h$ be a positive selfadjoint operator in $L^1(\M,\tau)$, and let $f\in\mathfrak{J}$ be a continuous convex function defined on $\mathbb{R}_+$ such that $f(h)\in L^1(\M,\tau)$. Then
\begin{align*}
 \tau(f(h))&=\sup\{\tau(f(\E_\A h)):\A\in\mathfrak{A}\}\\
  &=\sup\big\{\sum_{i=1}^\infty f(\alpha_i)\tau(p_i):p_1,p_2,\dots\textit{--- projections in }\M,\\
  &\phantom{sup iaaaa}\sum_{i=1}^\infty p_i=\1,\,\tau(p_i)<+\infty\},
\end{align*}
where
\[
 \alpha_i=\frac{\tau(p_ih)}{\tau(p_i)}.
\]
\end{theorem}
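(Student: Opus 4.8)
The plan is to establish the two displayed equalities separately. The second one I would obtain as a term-by-term identity: each $\A\in\mathfrak A$ arises from a resolution of identity $\{p_1,p_2,\dots\}$ into projections of finite trace as in \eqref{A}, and for such $\A$ the $L^1$-extended conditional expectation is $\E_\A h=\sum_i\alpha_i p_i$ with $\alpha_i=\tau(p_ih)/\tau(p_i)$ (the standard formula for the expectation onto a ``partition'' subalgebra, legitimate because $\tau(p_i)<+\infty$ and $\tau(p_ih)=\tau(p_ihp_i)\geq0$ is finite); hence $f(\E_\A h)=\sum_i f(\alpha_i)p_i$ by functional calculus, and $\tau(f(\E_\A h))=\sum_i f(\alpha_i)\tau(p_i)$ once one checks that this series converges absolutely and may be handled term by term — which will drop out of the estimates in the main construction, absolute convergence also following from $\tau(f(\E_\A h))\leq\tau(f(h))<+\infty$ together with the convexity of $f$. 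So it remains to prove the first equality. For the inequality ``$\geq$'' there is nothing new: every $\A\in\mathfrak A$ is an abelian von Neumann subalgebra of $\M$ with $\tau|\A$ semifinite, so Proposition \ref{conv} (equivalently, Jensen's inequality \eqref{Jen1} with $\tau\circ\E_\A=\tau$) gives $\tau(f(\E_\A h))\leq\tau(f(h))$, and the supremum over $\mathfrak A$ is therefore $\leq\tau(f(h))$.

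The content is in the reverse inequality $\tau(f(h))\leq\sup\{\tau(f(\E_\A h)):\A\in\mathfrak A\}$. Fix $\varepsilon>0$, write $h=\int_0^\infty t\,e(dt)$, and recall $\tau(f(h))=\int_0^\infty f(t)\,\tau(e(dt))$, an absolutely convergent integral since $f(h)\in L^1(\M,\tau)$. I would construct a resolution of identity as follows. Put $q=e(\{0\})$ and choose projections $q_1,q_2,\dots$ with $\sum_m q_m=q$ and $\tau(q_m)<+\infty$ (possible by semifiniteness of $\tau$, as in the proof of Theorem \ref{varfor1}). For $k\in\mathbb{Z}$ let $J_k=[2^k,2^{k+1})$ and $E_k=e(J_k)$; the key quantitative input is \eqref{sf}, which here yields $\tau(E_k)\leq\tau(e([2^k,+\infty)))\leq2^{-k}\tau(h)<+\infty$. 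Subdivide each $J_k$ into $m_k$ subintervals $J_{k,l}$ of equal length $2^k/m_k$ and set $E_{k,l}=e(J_{k,l})$ (retaining only the nonzero ones, which then have positive finite trace since $\tau$ is faithful); then $\{q_m\}_m\cup\{E_{k,l}\}_{k,l}$ is a countable resolution of identity into finite-trace projections, generating some $\A\in\mathfrak A$.

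To estimate $\tau(f(\E_\A h))$, note that the kernel pieces contribute $\sum_m f(0)\tau(q_m)=f(0)\tau(q)$, which is exactly the $\{0\}$-contribution to $\int_0^\infty f(t)\,\tau(e(dt))$ (and is finite: if $\tau(q)=+\infty$ then necessarily $f(0)=0$, as otherwise $f(h)\notin L^1(\M,\tau)$). For the remaining pieces, $\alpha_{k,l}:=\tau(E_{k,l}h)/\tau(E_{k,l})=\bigl(\int_{J_{k,l}}t\,\tau(e(dt))\bigr)/\tau(E_{k,l})\in\overline{J_{k,l}}$, so, writing $\omega_k$ for the modulus of continuity of $f$ on the compact interval $[2^k,2^{k+1}]$,
\[
 \Bigl|f(\alpha_{k,l})\tau(E_{k,l})-\int_{J_{k,l}}f(t)\,\tau(e(dt))\Bigr|=\Bigl|\int_{J_{k,l}}\bigl(f(\alpha_{k,l})-f(t)\bigr)\tau(e(dt))\Bigr|\leq\omega_k(2^k/m_k)\,\tau(E_{k,l}).
\]
Summing over $l$, then over $k$, and choosing each $m_k$ so large that $\omega_k(2^k/m_k)\,\tau(E_k)\leq\varepsilon\,2^{-|k|-2}$, I obtain $\bigl|\sum_{k,l}f(\alpha_{k,l})\tau(E_{k,l})-\int_{(0,+\infty)}f(t)\,\tau(e(dt))\bigr|\leq\varepsilon$; adding the kernel contribution back gives $\tau(f(\E_\A h))=\sum_i f(\alpha_i)\tau(p_i)\geq\tau(f(h))-\varepsilon$, and letting $\varepsilon\to0$ closes the argument.

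The only genuinely delicate point I foresee is arranging the approximating partition with \emph{all} blocks of finite trace while keeping control of $\tau(f(\cdot))$. What makes this possible is that $h\in L^1(\M,\tau)$ forces $\tau(e([a,+\infty)))<+\infty$ for every $a>0$, so a dyadic decomposition of $(0,+\infty)$ that stays bounded away from $0$ automatically produces finite-trace spectral projections; the kernel $e(\{0\})$ is the single part requiring a separate treatment, and there the approximation happens to be exact. Beyond that, the argument is just uniform continuity of $f$ on each compact dyadic block plus a geometric-series bookkeeping over $k\in\mathbb{Z}$, together with the standard $L^1$-norm convergence facts ($\sum_i p_i\,g\to g$ in $L^1(\M,\tau)$ for $g\in L^1(\M,\tau)$) used to identify $\tau(f(\E_\A h))$ with the series $\sum_i f(\alpha_i)\tau(p_i)$.
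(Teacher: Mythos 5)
Your argument is correct, and its skeleton is the same as the paper's: the second displayed equality comes from computing $\E_\A h=\sum_i\alpha_ip_i$ and applying the functional calculus; the inequality ``$\geq$'' is Jensen's inequality via Proposition \ref{conv}; and the reverse inequality is reduced, through the spectral measure $e$ of $h$ and the scalar measure $\mu=\tau(e(\cdot))$ (which is $\sigma$-finite on $(0,+\infty)$ by \eqref{sf}), to a commutative approximation statement, with the kernel $e(\{0\})$ split into finite-trace pieces and the remark that $\tau(e(\{0\}))=+\infty$ forces $f(0)=0$. Where you genuinely depart from the paper is in the commutative step: the paper invokes \cite[Theorem~1.2]{G} for the identity $\int f\,d\mu=\sup\sum_if(\alpha_i)\mu(E_i)$ over countable partitions into sets of finite measure, whereas you prove the half of it that is actually needed from scratch, by a dyadic decomposition $J_k=[2^k,2^{k+1})$, $k\in\mathbb{Z}$ (each of finite $\mu$-measure by \eqref{sf}), refined on each block until the modulus of continuity of $f$ on $[2^k,2^{k+1}]$ times $\tau(e(J_k))$ drops below $\varepsilon 2^{-|k|-2}$, together with the observation $\alpha_{k,l}\in\overline{J_{k,l}}$. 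This buys self-containedness --- only continuity of $f$ and $\mu$-integrability of $|f|$ are used in this direction, convexity entering solely through Jensen on the other side, exactly as in the cited scalar theorem --- and it makes the absolute convergence of $\sum_if(\alpha_i)\tau(p_i)$ for your particular partition explicit. The one point you share with the paper and leave slightly informal is the identification $\tau(f(\E_\A h))=\sum_if(\alpha_i)\tau(p_i)$ for an \emph{arbitrary} $\A\in\mathfrak{A}$ in the second equality (the positive and negative parts of the series need separate control, e.g.\ via an affine minorant of $f$ together with $\sum_i\alpha_i\tau(p_i)=\tau(h)<+\infty$); since the paper asserts this equality with the same brevity, I would not count it against you.
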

\begin{proof}
For $\A$ of the form \eqref{A}, let
\[
 \E_\A h=\sum_{i=1}^\infty\alpha_ip_i
\]
be the spectral decomposition of $\E_\A h$. From the properties of conditional expectation we get
\[
 \E_\A(p_ih)=p_i\E_\A h=\alpha_ip_i,
\]
thus
\[
 \alpha_i=\frac{\tau(p_ih)}{\tau(p_i)}.
\]
Consequently,
\[
 \tau\big(f(\E_\A h)\big)=\tau\big(\sum_{i=1}^\infty f(\alpha_i)p_i\big)=\sum_{i=1}^\infty f(\alpha_i)\tau(p_i),
\]
which shows that
\begin{align*}
 &\sup\{\tau(f(\E_\A h)):\A\in\mathfrak{A}\}\\
 =&\sup\big\{\sum_{i=1}^\infty f(\alpha_i)\tau(p_i):p_1,p_2,\dots\textit{--- projections in }\M,\\
 &\phantom{supaa}\sum_{i=1}^\infty p_i=\1,\,\tau(p_i)<+\infty\},
\end{align*}
since each $\A\in\mathfrak{A}$ is generated by projections $p_1,p_2,\dots$ such that $\sum\limits_{i=1}^\infty p_i=\1$, $\tau(p_i)<+\infty$.

Let
\[
 h=\int_0^\infty t\,e(dt)
\]
be the spectral decomposition of $h$. Assume first that $\tau(e(\{0\}))=+\infty$, and put $\mu=\tau(e(\cdot))$. The relation \eqref{sf} yields that $\mu$ is a $\sigma$-finite measure on $\mathbb{R}_+^0=(0,+\infty)$.

Let $\{E_1,E_2,\dots\}$ be a partition of $\mathbb{R}_+^0$ into disjoint Borel sets such that $\mu(E_i)<+\infty$. Put
\[
 \alpha_i=\frac{\int_{E_i}t\,\mu(dt)}{\mu(E_i)}=\frac{\tau\big(\int_{E_i}t\,e(dt)\big)}{\tau(e(E_i))}=\frac{\tau(e(E_i)h)}{\tau(e(E_i))}, \quad i=1,2,\dots.
\]
Let $f$ be a (continuous) convex function on $\mathbb{R}_+^0$. From \cite[Theorem~1.2]{G}, it follows that
\begin{align*}
 \int_{\mathbb{R}_+^0} f(t)\,\mu(dt)=\sup\big\{\sum_{i=1}^\infty f(\alpha_i)\mu(E_i):&\{E_1,E_2,\dots\} \textit{ --- partition of $\mathbb{R}_+^0$}\\
  &\textit{such that $\mu(E_i)<+\infty$}\big\}.
\end{align*}
The relation $f(h)\in L^1(\M,\tau)$ implies $f(0)=0$, thus
\[
 \tau(f(h))=\tau\Big(\int_{\mathbb{R}_+}f(t)\,e(dt)\Big)=\int_{\mathbb{R}_+^0}f(t)\,\tau(e(dt))=\int_{\mathbb{R}_+^0}f(t)\,\mu(dt),
\]
which yields
\begin{align*}
 \tau(f(h))=\sup\big\{\sum_{i=1}^\infty f(\alpha_i)\tau(e(E_i)):&\{E_1,E_2\dots\} \textit{ --- partition of $\mathbb{R}_+^0$}\\
 &\textit{such that $\tau(e(E_i))<+\infty$}\big\}.
\end{align*}
Now choose (arbitrary)  projections $q_1,q_2,\dots$ in $\M$ such that\\ $\sum\limits_{i=1}^\infty q_i=e(\{0\})$, $\tau(q_i)<+\infty$. For the coefficients $\alpha'_i$ we have
\[
 \alpha'_i=\frac{\tau(q_ih)}{\tau(q_i)}=0
\]
since $q_i\leq e(\{0\})$, and thus $f(\alpha'_i)=0$. It follows that
\begin{align*}
 \tau(f(h))=\sup&\big\{\sum_{i=1}^\infty f(\alpha_i)\tau(e(E_i))+\sum_{i=1}^\infty f(\alpha'_i)\tau(q_i):\\
 &\{E_1,E_2\dots\} \textit{ --- partition of $\mathbb{R}_+^0$ such that $\mu(E_i)<+\infty$}\big\}.
\end{align*}
Since
\[
 \sum_{i=1}^\infty e(E_i)+\sum_{i=1}^\infty q_i=e((0,+\infty))+e(\{0\})=\1,
\]
we obtain the inequality
\begin{align*}
 \tau(f(h))\leq\sup\big\{&\sum_{i=1}^\infty f(\alpha_i)\tau(p_i):p_1,p_2,\dots\textit{--- projections in }\M,\\
 &\sum_{i=1}^\infty p_i=\1,\,\tau(p_i)<+\infty\}.
\end{align*}
On the other hand, the inequality
\[
 \tau(f(h))\geq\sup\{\tau(f(\E_\A h)):\A\in\mathfrak{A}\},
\]
follows from Proposition \ref{conv}.

Now if $\tau(e(\{0\}))<+\infty$, then the measure $\mu=\tau(e(\cdot))$ is $\sigma$-finite on the whole of $\mathbb{R}_+$, and we repeat the above reasoning for partitions $\{E_1,E_2,\dots\}$ of $\mathbb{R}_+$, this time without the requirement $f(0)=0$ (and without introducing the projections $q_1,q_2,\dots$).
\end{proof}
As before, we have a counterpart of the theorem above for concave functions.
\begin{theorem}\label{varfor3}
Let $\M$ be a semifinite von Neumann algebra with a normal faithful semifinite trace $\tau$, let $h$ be a positive selfadjoint operator in $L^1(\M,\tau)$, and let $f\in\mathfrak{J}$ be a continuous concave function defined on $\mathbb{R}_+$ such that $f(h)\in L^1(\M,\tau)$. Then
\begin{align*}
  \tau(f(h))&=\inf\{\tau(f(\E_\A h)):\A\in\mathfrak{A}\}\\
  &=\inf\big\{\sum_{i=1}^\infty f(\alpha_i)\tau(p_i):p_1,p_2,\dots\textit{--- projections in }\M,\\
  &\phantom{inf aaaa}\sum_{i=1}^\infty p_i=\1,\,\tau(p_i)<+\infty\},
\end{align*}
where
\[
 \alpha_i=\frac{\tau(p_ih)}{\tau(p_i)}.
\]
\end{theorem}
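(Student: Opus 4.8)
The plan is to deduce Theorem~\ref{varfor3} from Theorem~\ref{varfor2} by the same device that turned Proposition~\ref{conv} into Proposition~\ref{conc}: apply the convex statement to the function $g:=-f$ and reverse all signs.

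First I would record that $g$ is a continuous convex function on $\mathbb{R}_+$ with $g(h)=-f(h)\in L^1(\M,\tau)$, and that $g\in\mathfrak{J}$. The last assertion is immediate: for any admissible map $\Phi$, inequality \eqref{Jen1} for the convex $g$ is obtained by multiplying the reversed inequality \eqref{Jen1} (which holds because $f\in\mathfrak{J}$ is concave) by $-1$, and both sides of \eqref{Jen1} for $g$ are defined exactly when those for $f$ are. Hence Theorem~\ref{varfor2} applies to $g$ and gives
\[
 \tau(g(h))=\sup\{\tau(g(\E_\A h)):\A\in\mathfrak{A}\}=\sup\Big\{\sum_{i=1}^\infty g(\alpha_i)\tau(p_i):\{p_i\}\text{ as in the statement}\Big\},
\]
with $\alpha_i=\tau(p_ih)/\tau(p_i)$.

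Then I would translate this back: since $\tau(g(h))=-\tau(f(h))$, $\tau(g(\E_\A h))=-\tau(f(\E_\A h))$, and $\sum_i g(\alpha_i)\tau(p_i)=-\sum_i f(\alpha_i)\tau(p_i)$, multiplying the displayed identity by $-1$ and using $\sup(-S)=-\inf S$ for every $S\subseteq[-\infty,+\infty]$ yields exactly the two equalities claimed for $\tau(f(h))$. I do not expect a genuine obstacle here; the argument is sign bookkeeping. The only point needing a line of justification is that the traces and series above are well defined as elements of $[-\infty,+\infty]$, so that the factor $-1$ commutes with the suprema and with the (possibly infinite) series — and this is inherited from Theorem~\ref{varfor2} (equivalently, from the discussion preceding Proposition~\ref{conv}), since a function lies in $\mathfrak{J}$ together with its negative.
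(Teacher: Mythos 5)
Your proposal is correct and is exactly the argument the paper intends: Theorem~\ref{varfor3} is stated with only the remark ``as before,'' meaning it follows from Theorem~\ref{varfor2} applied to the convex function $-f$, precisely as Proposition~\ref{conc} was deduced from Proposition~\ref{conv}. Your sign bookkeeping, including the observation that $-f\in\mathfrak{J}$ and that $\sup(-S)=-\inf S$, fills in the details faithfully.
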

As corollaries we obtain the following variational formulae for Segal's and R\'enyi's entropies.
\begin{corollary}\label{cor1}
Let $\M$ be a semifinite von Neumann algebra with a normal faithful semifinite trace $\tau$, and let $\omega$ be a state on $\M$ having finite Segal's entropy. Then
\begin{align*}
 H(\omega)=\sup\big\{&\sum_{i=1}^\infty\tau(p_iD_\omega)(\log\tau(p_iD_\omega)-\log\tau(p_i)):\\
 &p_1,p_2,\dots \textit{ --- projections in }\M,\,\sum_{i=1}^\infty p_i=\1,\,\tau(p_i)<+\infty\big\}
\end{align*}
\begin{align*}
 \phantom{H(\omega)}=\sup\big\{&\sum_{i=1}^\infty\omega(p_i)(\log\omega(p_i)-\log\tau(p_i)):\\
 &p_1,p_2,\dots\textit{ --- projections in }\M,\,\sum_{i=1}^\infty p_i=\1,\,\tau(p_i)<+\infty\big\}.
\end{align*}
\end{corollary}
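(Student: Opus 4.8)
The plan is to obtain Corollary~\ref{cor1} as an immediate specialisation of Theorem~\ref{varfor2}, applied to the function $f(t)=t\log t$ (with the convention $f(0)=0$) and the positive operator $h=D_\omega\in L^1(\M,\tau)$.

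First I would check that the hypotheses of Theorem~\ref{varfor2} are satisfied. The function $f(t)=t\log t$ is continuous and convex on $\mathbb{R}_+$, and it belongs to the class $\mathfrak{J}$ --- this is exactly one of the two guiding examples pointed out in the text, the weak form of Jensen's inequality for $t\log t$ having been established in \cite{LP}. The density $D_\omega$ is a positive element of $L^1(\M,\tau)$, and the assumption that $\omega$ has finite Segal entropy means precisely that $f(D_\omega)=D_\omega\log D_\omega\in L^1(\M,\tau)$, in which case $\tau(f(D_\omega))=H(\omega)$.

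Then Theorem~\ref{varfor2} gives directly
\[
 H(\omega)=\tau(f(D_\omega))=\sup\Big\{\sum_{i=1}^\infty f(\alpha_i)\tau(p_i):\ p_1,p_2,\dots\text{ projections in }\M,\ \sum_{i=1}^\infty p_i=\1,\ \tau(p_i)<+\infty\Big\},
\]
with $\alpha_i=\tau(p_iD_\omega)/\tau(p_i)$, and it remains only to rewrite the general term of the sum. For each $i$ one computes
\[
 f(\alpha_i)\tau(p_i)=\alpha_i\log\alpha_i\,\tau(p_i)=\frac{\tau(p_iD_\omega)}{\tau(p_i)}\Big(\log\tau(p_iD_\omega)-\log\tau(p_i)\Big)\tau(p_i)=\tau(p_iD_\omega)\big(\log\tau(p_iD_\omega)-\log\tau(p_i)\big),
\]
where in the case $\tau(p_iD_\omega)=0$ the right-hand side is read as $0$, in agreement with $f(0)=0$; this yields the first displayed formula of the corollary. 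The second formula is then immediate from the identity $\omega(p_i)=\tau(p_iD_\omega)$, valid for every projection $p_i\in\M$.

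I do not expect a genuine obstacle: the corollary is a direct translation of Theorem~\ref{varfor2}. The only points needing a little care are (i) applying the bookkeeping convention $0\log 0=0$ consistently, so that all terms of the defining sum are well defined even when some $\tau(p_iD_\omega)$ vanish (equivalently, when $\alpha_i=0$), and (ii) confirming the membership $t\log t\in\mathfrak{J}$ together with the reading of ``finite Segal entropy'' as $D_\omega\log D_\omega\in L^1(\M,\tau)$, both of which have already been settled in the preliminaries.
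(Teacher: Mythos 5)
Your proposal is correct and follows exactly the paper's route: the corollary is obtained by applying Theorem~\ref{varfor2} to $f(t)=t\log t$ and $h=D_\omega$, with the same rewriting $f(\alpha_i)\tau(p_i)=\tau(p_iD_\omega)\big(\log\tau(p_iD_\omega)-\log\tau(p_i)\big)$ and the identification $\omega(p_i)=\tau(p_iD_\omega)$. Your extra care about the convention $0\log 0=0$ and the membership $t\log t\in\mathfrak{J}$ only makes explicit what the paper leaves implicit.
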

This result was obtained in \cite[Theorem 4.2]{Pa}. It follows from applying Theorem \ref{varfor2} to the function $f(t)=t\log t$.
\begin{corollary}\label{cor2}
Let $\M$ be a semifinite von Neumann algebra with a normal faithful semifinite trace $\tau$, and let $\omega$ be a normalised state on $\M$ such that $\tau(D_\omega^\alpha)<+\infty$. Then
\begin{enumerate}
 \item[(i)] For $\alpha<1$
\begin{align*}
 R_\alpha(\omega)=&\frac{1}{\alpha-1}\inf\big\{\log\sum_{i=1}^\infty\omega(p_i)^\alpha\tau(p_i)^{1-\alpha}:\\
 &p_1,p_2,\dots\textit{ --- projections in }\M,\,\sum_{i=1}^\infty p_i=\1,\,\tau(p_i)<+\infty\big\}.
\end{align*}
 \item[(ii)] For $\alpha>1$
\begin{align*}
 R_\alpha(\omega)=&\frac{1}{\alpha-1}\sup\big\{\log\sum_{i=1}^\infty\frac{\omega(p_i)^\alpha}{\tau(p_i)^{\alpha-1}}:\\
 &p_1,p_2,\dots\textit{ --- projections in }\M,\,\sum_{i=1}^\infty p_i=\1,\,\tau(p_i)<+\infty\big\}.
\end{align*}
\end{enumerate}
\end{corollary}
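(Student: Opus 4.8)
Proof proposal. The plan is to apply Theorems \ref{varfor2} and \ref{varfor3} to the power function $f(t)=t^\alpha$ and then to rearrange the resulting variational identity by composing with the logarithm. The preliminary observations are: $t\mapsto t^\alpha$ is continuous on $\mathbb{R}_+$ with $t^\alpha\big|_{t=0}=0$; it is concave on $\mathbb{R}_+$ for $\alpha\in(0,1)$ and convex on $\mathbb{R}_+$ for $\alpha\in(1,+\infty)$; it belongs to the class $\mathfrak{J}$ for every $\alpha\in(0,1)\cup(1,+\infty)$ (this is one of the running examples of the paper, the extension of Jensen's inequality for $t^\alpha$ to unbounded operators being the content of \cite{LPW1}); and the standing hypothesis $\tau(D_\omega^\alpha)<+\infty$ is exactly the requirement $f(D_\omega)\in L^1(\M,\tau)$ with $h=D_\omega$. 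Thus both theorems are applicable with $h=D_\omega$.

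First I would do the case $\alpha\in(0,1)$. Applying Theorem \ref{varfor3} to the concave function $f(t)=t^\alpha$ and $h=D_\omega$ gives
\[
 \tau(D_\omega^\alpha)=\inf\Big\{\sum_{i=1}^\infty\alpha_i^\alpha\,\tau(p_i):p_1,p_2,\dots\text{ projections in }\M,\ \sum_{i=1}^\infty p_i=\1,\ \tau(p_i)<+\infty\Big\},
\]
where $\alpha_i=\tau(p_iD_\omega)/\tau(p_i)=\omega(p_i)/\tau(p_i)$. Substituting this value of $\alpha_i$ one computes $\alpha_i^\alpha\,\tau(p_i)=\omega(p_i)^\alpha\,\tau(p_i)^{1-\alpha}$, so the infimand becomes $\sum_i\omega(p_i)^\alpha\tau(p_i)^{1-\alpha}$. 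Since $\log$ is continuous and strictly increasing it commutes with the infimum (with the usual conventions at $0$ and $+\infty$), whence $\log\tau(D_\omega^\alpha)=\inf\{\log\sum_i\omega(p_i)^\alpha\tau(p_i)^{1-\alpha}:\dots\}$; multiplying by the constant $\tfrac{1}{\alpha-1}$ produces formula (i).

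For $\alpha\in(1,+\infty)$ the argument is the same with ``convex'' and Theorem \ref{varfor2} replacing ``concave'' and Theorem \ref{varfor3}, so the infimum becomes a supremum; the identical computation gives $\alpha_i^\alpha\tau(p_i)=\omega(p_i)^\alpha/\tau(p_i)^{\alpha-1}$, and passing through $\log$ (which commutes with the supremum) and multiplying by $\tfrac{1}{\alpha-1}>0$ yields formula (ii). I do not expect a serious obstacle here: the only step needing a word of care is the interchange of $\log$ with the supremum/infimum, since individual partitions may a priori give value $+\infty$; but this is immediate from monotonicity and continuity of $\log$ together with the fact — guaranteed by Theorems \ref{varfor2} and \ref{varfor3} — that the extremal value equals the finite number $\tau(D_\omega^\alpha)$. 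The one genuinely nontrivial input, namely $t^\alpha\in\mathfrak{J}$, is quoted from \cite{LPW1}.
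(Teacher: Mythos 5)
Your proposal is correct and follows exactly the paper's route: apply Theorem \ref{varfor3} (concave case, $\alpha<1$) and Theorem \ref{varfor2} (convex case, $\alpha>1$) to $f(t)=t^\alpha$ with $h=D_\omega$, compute $\alpha_i^\alpha\tau(p_i)=\omega(p_i)^\alpha\tau(p_i)^{1-\alpha}$, and pass the monotone $\log$ through the extremum before multiplying by the constant $\tfrac{1}{\alpha-1}$ outside. The paper states this in one sentence; your write-up merely fills in the same routine details.
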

The proof follows from applying Theorem \ref{varfor3} and Theorem \ref{varfor2} to the concave function $f(t)=t^\alpha$ for $\alpha<1$, and the convex function\\ $f(t)=t^\alpha$ for $\alpha>1$, respectively.
\begin{remark}
Observe that in the case of a finite von Neumann algebra we may, reasoning as in the proof of Theorem \ref{varfor2}, and using \cite[Remark 1.2]{G}, replace the infinite resolution of identity $\{p_1,p_2,\dots\}$ in Theorems \ref{varfor2} and \ref{varfor3}, and in Corollaries \ref{cor1} and \ref{cor2} by a finite one.
\end{remark}
\begin{theorem}
Let $\M$ be a semifinite von Neumann algebra, and let $\omega$ be a state on $\M$ having finite Segal's entropy. Then the following equalities hold true:
\begin{align*}
 H(\omega)=&\sup\{H(\omega|\A):\A - \textit{abelian von Neumann subalgebra of $\M$}\\
 &\phantom{a sup}\textit{such that $\tau|\A$ is semifinite}\}\\
 =&\sup\{H(\omega\circ\Phi):\Phi - \textit{normal positive linear unital map from an}\\
 &\phantom{a sup}\textit{abelian subalgebra $\A$ of $\M$ such that $\tau|\A$ is semifinite}\\
 &\phantom{a sup}\textit{into $\M$, }\tau\circ\Phi=\tau\}\\
 =&\sup\{H(\omega\circ\Phi): \Phi - \textit{as above, }\A\in\mathfrak{A}\}
\end{align*}
\end{theorem}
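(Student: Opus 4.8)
The plan is to derive all three equalities from Proposition~\ref{conv}, Theorem~\ref{varfor2} and the Jensen inequality~\eqref{Jen1}, applied throughout to the operator convex function $f(t)=t\log t$ (with $f(0):=0$), which belongs to $\mathfrak{J}$ and is continuous and convex on $\mathbb{R}_+$. First I would record that the hypothesis ``$\omega$ has finite Segal entropy'' is equivalent to $f(D_\omega)=D_\omega\log D_\omega\in L^1(\M,\tau)$: writing $\mu=\tau(e(\cdot))$ for the trace of the spectral measure of $D_\omega$, the integral $\int t\log t\,\mu(dt)$ is a finite real number precisely when both $\int(t\log t)^+\,\mu(dt)$ and $\int(t\log t)^-\,\mu(dt)$ are finite, i.e.\ when $\tau(|D_\omega\log D_\omega|)<+\infty$. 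Thus every application of the quoted results below is legitimate.

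For the first equality I would show that for any abelian von Neumann subalgebra $\A\subseteq\M$ with $\tau|\A$ semifinite the density of $\omega|\A$ in $L^1(\A,\tau|\A)$ is $\E_\A D_\omega$: for $a\in\A$, using $\tau\circ\E_\A=\tau$ and the $\A$-bimodule property $\E_\A(aD_\omega)=a\,\E_\A D_\omega$,
\[
(\omega|\A)(a)=\omega(a)=\tau(aD_\omega)=\tau\big(\E_\A(aD_\omega)\big)=(\tau|\A)\big(a\,\E_\A D_\omega\big),
\]
so $D_{\omega|\A}=\E_\A D_\omega$, whence $H(\omega|\A)=(\tau|\A)\big(f(\E_\A D_\omega)\big)=\tau\big(f(\E_\A D_\omega)\big)$. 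Proposition~\ref{conv}, applied to $h=D_\omega$, then yields
\[
H(\omega)=\tau\big(f(D_\omega)\big)=\sup\{\tau(f(\E_\A D_\omega)):\A\text{ abelian},\ \tau|\A\text{ semifinite}\}=\sup\{H(\omega|\A):\A\text{ as above}\}.
\]

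For the remaining two equalities I would establish the chain (the third supremum) $\le$ (the second supremum) $\le H(\omega)\le$ (the third supremum). The first inequality is immediate, since $\mathfrak{A}$ consists of abelian subalgebras with semifinite trace restriction, so the admissible maps for the third supremum form a subfamily of those for the second. For $H(\omega)\le$ (the third supremum): each inclusion $\iota_\A\colon\A\hookrightarrow\M$ with $\A\in\mathfrak{A}$ is a normal positive unital map with $\tau\circ\iota_\A=\tau|\A$ (which is the meaning of $\tau\circ\Phi=\tau$ here), and $\omega\circ\iota_\A=\omega|\A$, so the third supremum dominates $\sup\{H(\omega|\A):\A\in\mathfrak{A}\}$, which equals $H(\omega)$ by the first equality in the conclusion of Theorem~\ref{varfor2} (with $h=D_\omega$). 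The crux, which I expect to be the main obstacle, is the remaining inequality $H(\omega\circ\Phi)\le H(\omega)$ for every admissible $\Phi\colon\A\to\M$. Here I would pass to the predual map $\Phi^\sharp\colon L^1(\M,\tau)\to L^1(\A,\tau|\A)$ of $\Phi$, so that $D_{\omega\circ\Phi}=\Phi^\sharp D_\omega$. This map is positive; it is trace preserving because $\Phi$ is unital; and, using $\tau\circ\Phi=\tau$ together with abelianness of $\A$, its restriction to the algebra is a normal positive \emph{unital} map $\M\to\A$ — on a finite-trace projection $q\in\A$ one has $(\tau|\A)(q\,\Phi^\sharp x)=\tau(\Phi(q)x)\le\|x\|_\infty\,\tau(\Phi(q))=\|x\|_\infty\,(\tau|\A)(q)$ for $x\in\M^+\cap L^1(\M,\tau)$, whence $\Phi^\sharp x\le\|x\|_\infty\1$, and normality gives $\Phi^\sharp\1=\1$. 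The $L^1$-extension of Jensen's inequality for $f(t)=t\log t$ that underlies Proposition~\ref{conv} for conditional expectations applies verbatim to $\Phi^\sharp$, giving $\tau\big(f(\Phi^\sharp D_\omega)\big)\le\tau\big(\Phi^\sharp f(D_\omega)\big)$; trace preservation of $\Phi^\sharp$ turns the right-hand side into $\tau(f(D_\omega))=H(\omega)$, so $H(\omega\circ\Phi)=\tau\big(f(D_{\omega\circ\Phi})\big)\le H(\omega)$. Combining the four inequalities shows that the second and third suprema both equal $H(\omega)$. The delicate technical points are the verification that $\Phi^\sharp$ genuinely maps into, and unitally into, $\A$, and the passage from the bounded Jensen inequality to the unbounded operator $D_\omega\in L^1(\M,\tau)$; both are handled exactly as in the treatment of conditional expectations earlier in the paper.
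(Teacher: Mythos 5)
Your proof is correct and its overall architecture matches the paper's: the first equality via $D_{\omega|\A}=\E_\A D_\omega$ and Proposition~\ref{conv} applied to $f(t)=t\log t$, and the third via the inclusions $\iota_\A$ for $\A\in\mathfrak{A}$ together with Theorem~\ref{varfor2} (the paper phrases this through Corollary~\ref{cor1}, which is the same computation). The one genuine divergence is the inequality $H(\omega\circ\Phi)\leq H(\omega)$, which you correctly identify as the crux: the paper simply cites \cite[Theorem 11]{LP} for it, whereas you reprove it from scratch by passing to the predual map $\Phi^\sharp\colon\M\to\A$, checking that it is normal, positive, unital and trace-preserving (your verification via finite-trace projections $q\in\A$ and the abelianness of $\A$ is sound, though the step ``normality gives $\Phi^\sharp\1=\1$'' deserves a sentence, since $\1\notin L^1(\M,\tau)$ in the semifinite case and unitality must be read off from $\tau(q\,\Phi^\sharp x)\leq\|x\|_\infty\tau(q)$ together with $\Phi^\sharp q_n\nearrow\1$ along finite-trace projections), and then invoking the $L^1$-extended weak Jensen inequality for $t\log t$, which is exactly what membership of $t\log t$ in $\mathfrak{J}$ licenses. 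Your route buys a self-contained argument at the cost of re-deriving the cited monotonicity result, and it leans on the same $L^1$-Jensen machinery from \cite{LP} that underlies that citation, so nothing essentially new is needed; the paper's route is shorter but externalises this step. Your preliminary remark that finite Segal entropy means $D_\omega\log D_\omega\in L^1(\M,\tau)$ (both the positive and negative parts of $t\log t$ integrable against $\tau(e(\cdot))$) is the right reading and is needed to apply Proposition~\ref{conv} and Theorem~\ref{varfor2}; the paper uses it tacitly.
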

\begin{proof}
For an abelian subalgebra $\A$ of $\M$ such that $\tau|\A$ is semifinite, we have
\[
 D_{\omega|\A}=\E_\A D_\omega,
\]
thus the first equality follows from Proposition \ref{conv}. Let $\iota_\A\colon\A\hookrightarrow\M$ be the embedding. Then $\omega|\A=\omega\circ\iota_\A$, which yields
\begin{align*}
 H(\omega)=&\sup\{H(\omega\circ\iota_\A):\textit{$\A-$ abelian subalgebra of $\M$ such that}\\
 &\textit{$\tau|\A$ is semifinite}\}\leq\sup\{H(\omega\circ\Phi):\Phi - \textit{normal positive}\\
 &\textit{linear unital map from an abelian subalgebra $\A$ of $\M$}\\
 &\textit{such that $\tau|\A$ is semifinite into $\M$, $\tau\circ\Phi=\tau$}\}.
\end{align*}
Since on account of \cite[Theorem 11]{LP} we have $H(\omega\circ\Phi)\leq H(\omega)$, the second equality follows. The third equality is a consequence of Corollary \ref{cor1} upon observing that
\[
 \sum_{i=1}^\infty\omega(p_i)(\log\omega(p_i)-\log\tau(p_i))=H(\omega\circ\iota_\A),
\]
where $\A$ is the abelian algebra generated by the projections $p_i$.
\end{proof}

\end{document}